\newtheorem{theorem}{Theorem}
\newtheorem{proposition}[theorem]{Proposition}
\newtheorem{lemma}[theorem]{Lemma}
\newtheorem{corollary}[theorem]{Corollary}
\newenvironment{proof}{\noindent\textbf{Proof.}}{\hfill $\square$}
\newcommand{\zin}{\langle z\rangle^{-1}}
\DeclareMathOperator{\alf}{alph}
\DeclareMathOperator{\occ}{occ}
\begin{document}

\title{Remark on the identities of the grammic monoid with three generators}
\author{Mikhail V. Volkov}
\date{Ural Federal University, 620000 Ekaterinburg, Russia\\
m.v.volkov@urfu.ru}

\maketitle

\begin{abstract}
Grammic monoids have recently been introduced by Christian Choffrut in terms of the action of the free monoid over a fixed ordered alphabet $X$ on the set of rows of Young tableaux filled with elements from $X$ via Schensted's insertion. For $X=\{a,b,c\}$ with $a<b<c$, Choffrut has identified the grammic monoid on $X$ with the quotient of the plactic monoid on $X$ over the congruence generated by the pair $(bacb,cbab)$. Since $cbab\ne bcab$ in the latter monoid, the quotient is proper. We show that, nevertheless, the plactic and the grammic monoids with three generators satisfy the same identities.
\end{abstract}

\section{Introduction}

The \emph{plactic monoid with three generators} is defined by the presentation
\begin{equation}\label{eq:knuth3}
\left\langle a,b,c\quad \begin{tabular}{|@{\quad}llll} $a b a=b a a,$ & $b a b=b b a,$ & $a c a=c a a,$  & $c a c=c c a$\\[.1ex] $c b b=b c b,$ &  $c b c=c c b,$ & $b a c=b c a,$ & $a c b=c a b$ \end{tabular}\right\rangle.
\end{equation}
We denote the monoid by $M$. For any elements $u,v\in M$, we let $M/(u=v)$ denote the quotient of $M$ over the congruence on $M$ generated by the pair $(u,v)$. In other words, $M/(u=v)$ is the monoid generated by $a,b,c$ subject to the eight relations in \eqref{eq:knuth3} and the extra relation $u=v$.

As in \cite{KO12}, we let $N_2:=M /(b a c b=c b a b)$. (Here and below we use expressions like $A:=B$ to emphasize that $A$ is defined to be $B$.) By \cite[Theorem~1]{Choffrut}, the monoid $N_2$ is isomorphic to the \emph{grammic monoid with three generators} which was defined in \cite{Choffrut} in terms of the action of the free monoid over the alphabet  $\{a,b,c\}$ with $a<b<c$ on the set of rows of Young tableaux filled with $a,b,c$ via Schensted's insertion. Manipulations with Young tableaux are not used in this note, and therefore, we do not reproduce the combinatorial definition of the grammic monoid.

As the grammic monoid $N_2$ is a quotient of the plactic monoid $M$, every identity holding in $M$ holds in $N_2$, too. We are going to show that the converse is also true: $M$ satisfies all identities of $N_2$. Thus, the plactic and the grammic monoids with three generators satisfy the same identities, that is, they are \emph{equationally equivalent}. In addition, we exhibit a few other monoids equationally equivalent to $M$ and $N_2$.

\section{Preliminaries on identities in monoids}

Here we fix terminology and notation related to identities in monoids.

A \emph{word} is a finite sequence of symbols, called \emph{letters}. If $w=x_1\cdots x_n$ where $x_1,\dots,x_n$ are letters, the set $\{x_1,\dots,x_n\}$ is denoted by $\alf(w)$. For a letter $x$, we denote by $\occ(x,w)$ the number of times that $x$ occurs in $w$, that is,
\[
\occ(x,w):=|\{i\in\{1,\dots,n\}\mid x_i=x\}|.
\]
If $\alf(w)=\{y_1,\dots,y_k\}$, we may make this explicit by writing $w$ as $w(y_{1},\dots,y_{k})$.

If $S$ is a monoid, any map $\varphi\colon\alf(w)\to S$ is called a \emph{substitution}. The \emph{value} $w\varphi$ of $w$ under $\varphi$ is the element of $S$ that results from substituting $x\varphi$ for each letter $x\in\alf(w)$ and computing the product in $S$. If $w=w(y_{1},\dots,y_{k})$ and $y_i\varphi=s_i\in S$ for $i=1,\dots,k$, we have $w\varphi=w(s_{1},\dots,s_{k})$.

An \emph{identity} is a pair of words, traditionally written as a formal equality. We use the sign $\bumpeq$ when writing identities, so we write the pair $(w,w')$ as $w\bumpeq w'$, saving the usual equality sign $=$ for equalities of other kinds. An identity $w\bumpeq w'$ is said to be \emph{trivial} if $w=w'$ and \emph{non-trivial} otherwise.

A monoid $S$ \emph{satisfies} $w\bumpeq w'$ (or $w\bumpeq w'$ \emph{holds} in $S$) if $w\varphi=w'\varphi$ for every substitution $\varphi\colon\alf(ww')\to S$, that is, each substitution of~elements in $S$ for letters occurring in $w$ and $w'$ yields equal values to these words. Obviously, the satisfaction of an identity is inherited by forming direct products and taking submonoids and quotient monoids.

An identity $w\bumpeq w'$ is \emph{balanced} if $\occ(x,w)=\occ(x,w')$ for every $x\in\alf(ww')$. The following easy observation is the master key in this note.

\begin{lemma}
\label{lem:localization}
Suppose that a monoid ${S}$ has a submonoid ${T}$ such that ${S}$ is generated by ${T}\cup\{d\}$ for some element $d$ that commutes with every element of ${T}$. Then all balanced identities satisfied by ${T}$ hold in ${S}$ as well.
\end{lemma}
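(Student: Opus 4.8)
The plan is to exploit the centrality of $d$ to put every element of $S$ into a normal form $td^{k}$ with $t\in T$ and $k\ge 0$, and then to observe that balancedness forces the two sides of the identity to carry the same power of $d$, reducing everything to the corresponding identity in $T$. Concretely: since $S$ is generated by $T\cup\{d\}$, every $s\in S$ is a product of factors each lying in $T$ or equal to $d$; as $d$ commutes with every element of $T$, we may push each occurrence of $d$ to the right, collecting all of them into a single power $d^{k}$ and leaving in front a product of members of $T$, that is, an element of $T$. Hence $s=td^{k}$ for some $t\in T$ and some integer $k\ge 0$ (with the convention $d^{0}:=1$).

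Now let $w\bumpeq w'$ be a balanced identity holding in $T$ and write $\alf(ww')=\{y_{1},\dots,y_{n}\}$; note that balancedness entails $\alf(w)=\alf(w')=\alf(ww')$. Fix an arbitrary substitution $\varphi\colon\alf(ww')\to S$ and, using the normal form, write $y_{i}\varphi=t_{i}d^{m_{i}}$ with $t_{i}\in T$ and $m_{i}\ge 0$; define a second substitution $\psi\colon\alf(ww')\to T$ by $y_{i}\psi:=t_{i}$. Substituting the values $t_{i}d^{m_{i}}$ into $w$ and moving all occurrences of $d$ to the right (again using the centrality of $d$) gives
\[
w\varphi=(w\psi)\,d^{M},\qquad M=\sum_{i=1}^{n}\occ(y_{i},w)\,m_{i},
\]
and likewise $w'\varphi=(w'\psi)\,d^{M'}$ with $M'=\sum_{i=1}^{n}\occ(y_{i},w')\,m_{i}$. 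Since $w\bumpeq w'$ is balanced, $\occ(y_{i},w)=\occ(y_{i},w')$ for every $i$, so $M=M'$; and since $T$ satisfies $w\bumpeq w'$ while $\psi$ takes values in $T$, we have $w\psi=w'\psi$. Therefore $w\varphi=(w\psi)d^{M}=(w'\psi)d^{M'}=w'\varphi$, and as $\varphi$ was arbitrary, $S$ satisfies $w\bumpeq w'$, as required.

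I do not anticipate a genuine obstacle: the proof is short and uses each hypothesis exactly once — the centrality of $d$ to obtain the normal form and to gather the occurrences of $d$, and the balancedness to conclude $M=M'$. The one point deserving care is the bookkeeping of the exponent $M$, namely that after collecting the $d$'s it equals the $\occ$-weighted sum $\sum_{i}\occ(y_{i},w)\,m_{i}$; this is immediate once the factors of $d$ have been moved to the right, but it is precisely the place where balancedness enters, so it is worth stating explicitly.
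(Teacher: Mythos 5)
Your proof is correct and follows essentially the same route as the paper: write each substituted value in the normal form $t_i d^{m_i}$ using the centrality of $d$, collect the $d$'s into a single power whose exponent is the $\occ$-weighted sum, and use balancedness to equate the two exponents while the identity in $T$ handles the $T$-parts. The only (immaterial) difference is that you gather the powers of $d$ on the right rather than the left.
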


\begin{proof}
Take any balanced identity
\begin{equation}\label{eq:balance}
w(x_{1},\dots,x_{k})\bumpeq w'(x_{1},\dots,x_{k})
\end{equation}
satisfied by ${T}$. Consider any substitution $\varphi\colon\{x_{1},\dots,x_{k}\}\to{S}$ and let $s_i:=x_i\varphi$, $i=1,\dots,k$. Since ${S}$ is generated by ${T}\cup\{d\}$ and $d$ commutes with every element of ${T}$, there exists a representation $s_i=d^{r_i}p_i$ for some $p_i\in{T}$ and $r_i\in\mathbb{Z}_{\ge0}$. (We assume that $d^{r_i}=1$ whenever $r_i=0$.) Such a representation need not be unique; still, if we fix one such a representation for each $i=1,\dots,k$, we have
\begin{align*}
       w(s_{1},\dots,s_{k})&=d^Nw(p_{1},\dots,p_{k}) &&\text{where  $N=\sum\nolimits_{i=1}^k d^{r_i\occ(x_i,w)}$},\\
       w'(s_{1},\dots,s_{k})&=d^{N'}w'(p_{1},\dots,p_{k})&&\text{where  $N'=\sum\nolimits_{i=1}^k d^{r_i\occ(x_i,w')}$}.
\end{align*}
Since the identity \eqref{eq:balance} holds in ${T}$, we have $w(p_{1},\dots,p_{k})=w'(p_{1},\dots,p_{k})$, and since \eqref{eq:balance} is balanced, $\occ(x_i,w)=\occ(x_i,w')$ for each $i=1,\dots,k$, whence $N=N'$. Therefore, $w(s_{1},\dots,s_{k})=w'(s_{1},\dots,s_{k})$. Since the substitution $\varphi$ is arbitrary, we conclude that \eqref{eq:balance} holds in ${S}$.
\end{proof}

\smallskip

Recall that two monoids are said to be \emph{equationally equivalent} if they satisfy the same identities.
\begin{corollary}
\label{cor:equivalence}
If under the condition of Lemma~\ref{lem:localization}, the submonoid ${T}$ has a submonoid isomorphic to the infinite cyclic monoid, then ${T}$ and ${S}$ are equationally equivalent.
\end{corollary}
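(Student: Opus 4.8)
The plan is to derive the corollary from Lemma~\ref{lem:localization} together with two elementary facts about identities. One of the two inclusions between the identity sets of $T$ and $S$ is immediate: since $T$ is a submonoid of $S$, every identity holding in $S$ holds in $T$ as well. For the reverse inclusion I would take an arbitrary identity $w(x_1,\dots,x_k)\bumpeq w'(x_1,\dots,x_k)$ satisfied by $T$ and show that it must hold in $S$; by Lemma~\ref{lem:localization} it suffices to verify that such an identity is balanced.

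To establish balancedness, I would invoke the hypothesis that $T$ contains a submonoid $C$ isomorphic to the infinite cyclic monoid, with free generator $g$, say. Being a submonoid of $T$, the monoid $C$ satisfies $w\bumpeq w'$. Now fix an index $j\in\{1,\dots,k\}$ and consider the substitution sending $x_j$ to $g$ and every other letter $x_i$ to the identity element. Under this substitution $w$ takes the value $g^{\occ(x_j,w)}$ and $w'$ takes the value $g^{\occ(x_j,w')}$; as distinct powers of $g$ are distinct in $C$, we conclude that $\occ(x_j,w)=\occ(x_j,w')$. Since $j$ was arbitrary, $w\bumpeq w'$ is balanced, and Lemma~\ref{lem:localization} then gives that it holds in $S$. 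Together with the first inclusion, this shows that $T$ and $S$ are equationally equivalent.

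I do not anticipate a genuine obstacle here, since the corollary is essentially a repackaging of Lemma~\ref{lem:localization}. The one point deserving a little care is the remark that the infinite cyclic monoid ``detects'' unbalanced identities, that is, that $g^m=g^n$ forces $m=n$ in the free monoid on a single generator; this is exactly what lets us pass from ``satisfied by $T$'' to ``balanced''. Everything else is routine bookkeeping.
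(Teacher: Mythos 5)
Your argument is correct and follows essentially the same route as the paper: deduce from the infinite cyclic submonoid that every identity of $T$ is balanced, then apply Lemma~\ref{lem:localization} for one inclusion and the submonoid property for the other. The only difference is that you spell out, via the substitution $x_j\mapsto g$, $x_i\mapsto 1$, the ``well known'' fact that the infinite cyclic monoid satisfies only balanced identities, which the paper simply cites.
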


\begin{proof}
It is well known that every identity satisfied by the infinite cyclic monoid is balanced. Hence, so is every identity satisfied by $T$ for identities are inherited by submonoids. Now Lemma~\ref{lem:localization} ensures that every identity satisfied by ${T}$ holds in ${S}$. The converse is obviously true.
\end{proof}

\section{Quotients of the plactic monoid with three generators and their localizations}

That the monoid $M$ satisfies a non-trivial identity was deduced by Kubat and Okni{\'n}\-ski \cite{KO15} from the structure theory developed in their earlier paper~\cite{KO12}. The present note uses two facts from this theory, which we recall here.

In \cite{KO12}, Kubat and Okni{\'n}ski defined two quotients of the 3-generated plactic monoid: $N_1:=M /(a c=c a)$ and $N_2:=M /(b a c b=c b a b)$, the latter having already been mentioned. Their role is explained by the next fact that readily follows from \cite[Proposition 2.5]{KO12}:
\begin{proposition}
\label{prop:subdirect}
The $3$-generated plactic monoid $M$ is a subdirect product of its quotients $N_1$ and $N_2$.
\end{proposition}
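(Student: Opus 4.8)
The plan is to prove the only thing the statement can mean: that the homomorphism $\theta\colon M\to N_1\times N_2$ whose two components are the canonical quotient maps $\alpha\colon M\twoheadrightarrow N_1$ and $\beta\colon M\twoheadrightarrow N_2$ is injective. Each of $\alpha,\beta$ is onto, so the image of $\theta$ automatically projects onto both factors; hence ``$M$ is a subdirect product of $N_1$ and $N_2$'' is equivalent to injectivity of $\theta$, i.e.\ to the statement that the congruences $\rho_1,\rho_2$ on $M$ with $M/\rho_1=N_1$ and $M/\rho_2=N_2$ intersect in the equality relation. Thus I must check: if $w,w'\in M$ have the same image under $\alpha$ and the same image under $\beta$, then $w=w'$.

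I would argue with the standard normal form of the plactic monoid. Every element of $M$ is represented by a unique semistandard Young tableau over the chain $a<b<c$; having only three letters, such a tableau has at most three rows and is encoded by six non-negative integers $(p,q,r,s,t,u)$ --- first row $a^pb^qc^r$, second row $b^sc^t$, third row $c^u$ --- subject to $u\le s\le p$ and $s+t\le p+q$. Since the relations $ac=ca$ and $bacb=cbab$ are balanced, the homomorphism $M\to\mathbb N^3$, $v\mapsto(\occ(a,v),\occ(b,v),\occ(c,v))$, factors through $N_1$ and through $N_2$; so $w$ and $w'$ already have the same content, and it suffices to show that the pair $(\alpha(w),\beta(w))$ determines $s,t,u$ (the other three parameters being then fixed by the content).

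A first, coarse batch of invariants kept by \emph{both} quotients comes from the retractions $\pi_{ab},\pi_{bc}$ of $M$ onto the rank-$2$ plactic monoids on $\{a,b\}$ and on $\{b,c\}$, obtained by erasing $c$, respectively $a$; checking \eqref{eq:knuth3} relation by relation shows that each of them factors through $N_1$ and through $N_2$. Since erasing $c$ just deletes the trailing $c$-blocks from every row of a tableau, $\pi_{ab}(w)$ has second row $b^s$, which recovers $s$; a similar (slightly less transparent) insertion computation shows that the second row of $\pi_{bc}(w)$ has length $u+\min(q,t)$. These numbers alone do not suffice --- they separate neither $bacb$ from $cbab$ nor $aac$ from $aca$ --- and the two quotients are designed to retain \emph{complementary} further data: $N_2$ keeps the relative arrangement of the $a$'s and $c$'s, which $N_1$ destroys (indeed $ac\ne ca$ in $N_2$), while $N_1$ sees finer tableau features --- for instance it keeps $bacb$ and $cbab$ apart, though $N_2$ identifies them by definition and though they have equal images under $\pi_{ab}$ and $\pi_{bc}$. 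Making both points precise amounts to exhibiting workable normal forms for $N_1$ and $N_2$ and then checking that no two distinct tableaux are collapsed at once by $\rho_1$ and by $\rho_2$; granting this, $(\alpha(w),\beta(w))$ determines $p,q,r,s,t,u$, and we are done.

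The hard part is precisely this last step --- an accurate description of the $\rho_1$- and $\rho_2$-classes on the set of tableaux, together with the verification that their intersection is trivial. This is the structural analysis of the rank-$3$ plactic monoid carried out by Kubat and Okni{\'n}ski; Proposition~\ref{prop:subdirect} follows from it, concretely from \cite[Proposition~2.5]{KO12}, and the route sketched above is essentially how one reconstructs it. Everything else --- the reduction to $\rho_1\cap\rho_2$ being trivial, the passage to a fixed content class, and the bookkeeping with $\pi_{ab}$ and $\pi_{bc}$ --- is routine.
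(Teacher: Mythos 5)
The paper offers no proof of this proposition: it is imported verbatim from \cite[Proposition~2.5]{KO12} (``the next fact that readily follows from\dots''). Your proposal ultimately lands in the same place. The reductions you perform are correct: ``$M$ is a subdirect product of $N_1$ and $N_2$'' does mean exactly that the natural map $M\to N_1\times N_2$ is injective, i.e.\ that $\rho_1\cap\rho_2$ is the equality relation; the content homomorphism and the two retractions $\pi_{ab},\pi_{bc}$ do factor through both quotients (the relation-by-relation check works, since erasing a letter from each side of every defining relation of $N_1$ and $N_2$ yields a plactic consequence on the remaining two letters); the tableau parametrization with $u\le s\le p$, $s+t\le p+q$ is right; and the invariants $s$ and $u+\min(q,t)$, together with your observation that they separate neither $bacb$ from $cbab$ nor $aac$ from $aca$, are computed correctly. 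But the one step that actually requires an argument --- describing the $\rho_1$- and $\rho_2$-classes of tableaux and verifying that their intersection is trivial --- is named rather than performed, and you explicitly defer it to the very result being cited. So, measured against the paper, your attempt is no less complete than the original (which proves nothing and cites \cite[Proposition~2.5]{KO12}); measured as a self-contained proof, it has a genuine gap, namely the entire structural analysis of $N_1$ and $N_2$ on which the triviality of $\rho_1\cap\rho_2$ rests.
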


Consider the element $z:=cba$ of $M$. The two features of this element stated in the next lemma are specializations of general properties of central elements in plactic monoids established in \cite{LS1981}.
\begin{lemma}
\label{lem:z}
\emph{(a)} The element $z$ lies in the center of $M$, that is, $zw=wz$ for every $w\in M$. Moreover, the center of $M$ equals the submonoid $\langle z\rangle$ generated by $z$.

\emph{(b)} If $zw = zv$ for some $w,v\in M$, then $w = v$.
\end{lemma}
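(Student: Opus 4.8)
The plan is to separate the three assertions, since only the first --- that $z$ is central --- is genuinely elementary, while the description of the center and the cancellativity in~(b) rest on the combinatorics of plactic monoids.

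For centrality it suffices to verify that $z=cba$ commutes with each of $a$, $b$, $c$, since these generate $M$ and commutation with an arbitrary word then follows by a trivial induction on its length. Each of the three required equalities is reached by two applications of the relations in~\eqref{eq:knuth3}: one has $z\cdot a = cbaa = caba = acba = a\cdot z$, where the two inner equalities use $aba=baa$ and $acb=cab$; next $z\cdot b = cbab = cbba = bcba = b\cdot z$, using $bab=bba$ and $cbb=bcb$; and finally $z\cdot c = cbac = cbca = ccba = c\cdot z$, using $bac=bca$ and $cbc=ccb$. So this step is routine bookkeeping.

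For the other two claims I would invoke the results of Lascoux and Sch\"utzenberger~\cite{LS1981} on central elements of plactic monoids: over an alphabet $\{1<2<\cdots<n\}$ the center of the plactic monoid is the infinite cyclic submonoid generated by the ``full column'' word $n(n-1)\cdots1$, and this word is a cancellable element. For $n=3$ that word is exactly $z=cba$, so both statements follow by specialization. If a self-contained treatment is preferred, I would instead argue via Schensted insertion. Using that the Robinson--Schensted tableau $P(uv)$ depends only on $P(u)$ and $P(v)$, one checks that left multiplication by $z$ sends the tableau of $w$ to the tableau obtained by adjoining $a,b,c$ as a new first column; this is again a semistandard tableau, because in any three-row tableau over $\{a,b,c\}$ the entries of the $i$-th row are all $\ge i$. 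That operation is visibly injective, which yields~(b); its image is precisely the family of rectangular three-row tableaux, and these are exactly the tableaux of the powers of $z$; it then remains to see that the tableau of any central element must be such a rectangle.

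The main obstacle is exactly this last point: that $\langle z\rangle$ exhausts the center. Centrality and cancellativity of $z$ reduce to short computations, but excluding further central elements requires genuine work (or the appeal to~\cite{LS1981}) --- one must show that an element commuting with all three generators cannot have a non-rectangular tableau, for instance by verifying that a nontrivial central word must contain each of $a$, $b$, $c$ and, more sharply, that any deviation of its tableau from rectangular shape is exposed by inserting a suitable single generator on the left rather than on the right.
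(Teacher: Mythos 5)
Your proposal takes essentially the same route as the paper, which states this lemma without proof, attributing both parts to the general results on central elements of plactic monoids in \cite{LS1981}; your explicit verifications via the relations of \eqref{eq:knuth3} that $z=cba$ commutes with each of $a$, $b$, $c$ are all correct and are a welcome elementary supplement. You rightly identify that the only genuinely nontrivial points are that the center is exhausted by $\langle z\rangle$ and the cancellability in (b), and your deferral of these to \cite{LS1981} is exactly what the paper does.
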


Let $M\zin$ stand for the \emph{central localization} of the monoid $M$. Lemma~\ref{lem:z}(a) justifies the notation: as the center of $M$ is just the submonoid $\langle z\rangle$, we only need to adjoin inverses to the elements of this submonoid. Thus, every element $w\in M\zin$ can be written as $w=vz^{m}$ for some $v\in M\setminus Mz$ and $m\in\mathbb{Z}$. Lemma~\ref{lem:z}(b) ensures the uniqueness of such a representation and also implies that $M$ embeds in $M\zin$.

Clearly, the images of $z$ in the quotients $N_1$ and $N_2$ of $M$ lie in the centers of these monoids. Slightly abusing notation, we denote each of these images again by $z$ and consider the localizations $N_1\zin$ and $N_2\zin$. One can show that the analogue of Lemma~\ref{lem:z}(b) holds for the monoid $N_i$, $i=1,2$, and therefore, $N_i$ embeds into $N_i\zin$. The following fact is Proposition 2.6(2) from~\cite{KO12}:
\begin{proposition}
\label{prop:isomorphism}
The monoids $N_1\zin$ and  $N_2\zin$ are isomorphic.
\end{proposition}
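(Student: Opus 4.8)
My plan is to realize both $N_1\zin$ and $N_2\zin$ as quotients of the single monoid $M\zin$ and then to show that the two quotient congruences coincide. First I would record that central localization commutes with passage to a quotient: if $\rho$ is a congruence on a monoid $P$ whose centre contains $\langle z\rangle$, then $(P/\rho)\zin\cong P\zin/\overline{\rho}$, where $\overline{\rho}$ is the congruence on $P\zin$ generated by the image of $\rho$. Both sides satisfy the same universal property, namely being initial among homomorphisms out of $P$ that collapse $\rho$ and send $z$ to a unit, so this is routine; it applies here because the images of $z$ in $N_1$ and $N_2$ are central and, by the $N_i$-analogue of Lemma~\ref{lem:z}(b) quoted above, cancellable. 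Thus $N_1\zin\cong M\zin/\sigma_1$ and $N_2\zin\cong M\zin/\sigma_2$, where $\sigma_1$ is the congruence on $M\zin$ generated by $(ac,ca)$ and $\sigma_2$ the one generated by $(bacb,cbab)$. It therefore suffices to prove $\sigma_1=\sigma_2$, i.e.\ that $ac=ca$ holds in $N_2\zin$ and that $bacb=cbab$ holds in $N_1\zin$.

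Two equalities valid already in $M$ streamline the bookkeeping. From $bab=bba$ and $cbb=bcb$ one gets $cbab=cbba=bcba=b(cba)=bz=zb$, and from $bac=bca$ one gets $bacb=bcab$. Hence inside $N_2$ the defining relation reads simply $bcab=zb$, while ``$bacb=cbab$ in $N_1\zin$'' becomes ``$bcab\cdot z^{\ell}=b\,z^{\ell+1}$ in $N_1$ for some $\ell\ge 0$''. I would settle the latter by exhibiting a suitable power and computing directly in $N_1$, where the relation $ac=ca$ makes $a$ and $c$ commute and makes the Knuth relations $bac=bca$ and $acb=cab$ redundant, so that after multiplying $bcab$ and $bz$ by $z^{\ell}$ and simplifying with the surviving relations the two sides coincide.

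The main obstacle is the other direction: deriving $ac=ca$ in $N_2\zin$, i.e.\ $ac\cdot z^{k}=ca\cdot z^{k}$ in $N_2$ for a suitable $k$. The localization is genuinely needed, because $ac\ne ca$ already in $N_2$: the subdirect embedding $M\hookrightarrow N_1\times N_2$ of Proposition~\ref{prop:subdirect}, combined with $ac=ca$ in $N_1$ and $ac\ne ca$ in $M$, forces the $N_2$-coordinates of $ac$ and $ca$ to be distinct. So the argument has to apply the relation $bcab=zb$ several times, interleaved with Knuth moves. In Schensted's model the set-up is transparent: $z$ is the one-column tableau $\begin{smallmatrix}a\\b\\c\end{smallmatrix}$, the relation identifies the tableau $\begin{smallmatrix}a&b\\b&c\end{smallmatrix}$ with the column-augmented tableau $\begin{smallmatrix}a&b\\b\\c\end{smallmatrix}$ ($=z\cdot b$), and one must check that the insertion tableaux of $ac$ and of $ca$ become equal once enough such columns have been adjoined. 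Carrying this out is the technical heart, and I expect the cleanest route is to use the structural description of $N_2$ from \cite{KO12} underlying Proposition~\ref{prop:subdirect}, which provides normal forms in which the collapse after inverting $z$ is visible; alternatively one writes the required chain of relation-applications explicitly. Once both inclusions are in hand, $\sigma_1=\sigma_2$, so $N_1\zin\cong N_2\zin$, the isomorphism being the identity on generators.
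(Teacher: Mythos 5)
The paper does not prove this proposition at all---it is quoted as Proposition~2.6(2) of \cite{KO12}---so your attempt stands on its own. Unfortunately, the reduction to $\sigma_1=\sigma_2$ cannot be completed, because that equality is false. The key point, which you quote but do not exploit, is that the analogue of Lemma~\ref{lem:z}(b) holds in each $N_i$: the central element $z$ is cancellable. Localizing at a cancellable central element identifies no previously distinct elements: two elements $u,v\in N_2$ have equal images in $N_2\zin$ if and only if $uz^k=vz^k$ for some $k\ge 0$, which by cancelling $z^k$ forces $u=v$ in $N_2$ itself. You correctly deduced from Proposition~\ref{prop:subdirect} that $ac\ne ca$ in $N_2$; hence $ac\ne ca$ in $N_2\zin$, and the chain of relation applications you defer as the ``technical heart'' does not exist. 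The other inclusion fails for the same reason: Proposition~\ref{prop:subdirect} says the two defining congruences on $M$ intersect trivially, and neither is trivial, so neither contains the other; in particular $bacb\ne cbab$ in $N_1$ and therefore, again by cancellability of $z$, also in $N_1\zin$. (A more structural way to see the collapse: if $\sigma_1=\sigma_2$, the images of $M$ in the common quotient $M\zin/\sigma_1=M\zin/\sigma_2$ would be a single submonoid isomorphic simultaneously to $N_1$ and to $N_2$, contradicting the remark after Proposition~\ref{prop:isomorphism} that $N_1\not\cong N_2$.)

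The upshot is that any isomorphism $N_1\zin\cong N_2\zin$ must move the generators $a,b,c$ nontrivially, so the strategy of matching the two defining congruences inside $M\zin$ has to be abandoned rather than completed. This is why \cite{KO12} proceeds differently: Proposition~2.6 there derives explicit structural models of $N_1\zin$ and of $N_2\zin$ from normal forms in $N_1$ and $N_2$ and identifies the two models directly. Your preliminary steps are sound---the universal-property argument for $(P/\rho)\zin\cong P\zin/\overline{\rho}$ is fine, as are the computations $cbab=bz$ and $bacb=bcab$ in $M$ and the observation that $ac\ne ca$ in $N_2$---but the last of these is precisely what refutes the plan.
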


Notice that the monoids $N_1$ and $N_2$ themselves are not isomorphic.

\section{Equational equivalence}

\begin{theorem}
\label{thm:equivalence}
The monoids $M$, $N_1$, and $N_2$ are pairwise equationally equivalent.
\end{theorem}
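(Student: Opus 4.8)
The plan is to leverage the three structural facts recalled in Section~3 --- the subdirect decomposition of Proposition~\ref{prop:subdirect}, the centrality of $z=cba$ from Lemma~\ref{lem:z}, and the isomorphism of central localizations in Proposition~\ref{prop:isomorphism} --- feeding them into the localization principle of Corollary~\ref{cor:equivalence}. Write $P\equiv Q$ to mean that monoids $P$ and $Q$ are equationally equivalent; this is an equivalence relation, so it suffices to establish $M\equiv N_1$ and $N_1\equiv N_2$.

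First I would check that Corollary~\ref{cor:equivalence} applies to each of $M$, $N_1$, $N_2$ together with its central localization. Since all relations in \eqref{eq:knuth3} and both extra relations $ac=ca$, $bacb=cbab$ preserve word length, the powers of $a$ stay pairwise distinct in each of the three monoids, so each of $M$, $N_1$, $N_2$ contains an infinite cyclic submonoid. The element $z$ is central in $M$ and its image is central in $N_1$ and in $N_2$, and each of the three monoids embeds in its central localization (Lemma~\ref{lem:z}(b), and its analogue for $N_i$ quoted before Proposition~\ref{prop:isomorphism}). Thus, taking $T$ to be $M$, $N_1$, or $N_2$, setting $S=T\zin$, and taking $d=z^{-1}$ --- which commutes with all of $T$ as the inverse of a central element, and which together with $T$ generates $S$ --- Lemma~\ref{lem:localization} and Corollary~\ref{cor:equivalence} give $M\equiv M\zin$, $N_1\equiv N_1\zin$, and $N_2\equiv N_2\zin$.

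Next I would close the loop on $N_1$ and $N_2$: Proposition~\ref{prop:isomorphism} says $N_1\zin\cong N_2\zin$, hence $N_1\zin\equiv N_2\zin$, and chaining with the previous step ($N_1\equiv N_1\zin\equiv N_2\zin\equiv N_2$) gives $N_1\equiv N_2$. Finally, to bring in $M$, I would use Proposition~\ref{prop:subdirect}: $M$ sits inside $N_1\times N_2$ as a submonoid. An identity holds in $N_1\times N_2$ exactly when it holds in both factors, and since $N_1\equiv N_2$ this is the same as holding in $N_1$ alone; as identities are inherited by submonoids, every identity of $N_1$ then holds in $M$. Conversely, every identity of $M$ holds in the quotient $N_1$. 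Hence $M\equiv N_1$, and combined with $N_1\equiv N_2$ this shows that $M$, $N_1$, $N_2$ are pairwise equationally equivalent.

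The only step I would expect to need genuine care is verifying the hypotheses of Corollary~\ref{cor:equivalence} for $N_1$ and $N_2$ rather than just for $M$: this hinges on the centrality of the image of $z$ in $N_i$ and on the embedding $N_i\hookrightarrow N_i\zin$, that is, on the (here unproved) analogue of Lemma~\ref{lem:z}(b) for $N_i$. Once that is granted, the remainder is a formal manipulation of identities under submonoids, quotients, direct products, and localization.
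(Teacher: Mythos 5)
Your proposal is correct and follows essentially the same route as the paper: apply Corollary~\ref{cor:equivalence} to $N_i\subseteq N_i\zin$ with $d=z^{-1}$, transfer equivalence through the isomorphism $N_1\zin\cong N_2\zin$, and then use the subdirect decomposition of Proposition~\ref{prop:subdirect} one way and the quotient property the other way. Your explicit length-preservation argument for the infinite cyclic submonoid is a nice touch (the paper just asserts it), and the extra step $M\equiv M\zin$ is harmless, though not needed for this theorem.
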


\begin{proof}
For each $i=1,2$, the monoid $N_i$ contains submonoids isomorphic to the infinite cyclic monoid; in fact, for any $u\in N_i\setminus\{1\}$, the submonoid $\langle u\rangle$ is infinite. The localization $N_i\zin$ is generated by $N_i\cup\{z^{-1}\}$ and the element $z^{-1}$ commutes with every element of $N_i$. By Corollary~\ref{cor:equivalence}, the monoids $N_i$ and $N_i\cup\{z^{-1}\}$ are equationally equivalent.

Since $N_1\zin\cong N_2\zin$ by Proposition~\ref{prop:isomorphism}, we conclude that $N_1$ and $N_2$ are equationally equivalent. Therefore, if $N_1$ satisfies $w\bumpeq w'$, then so does $N_2$ whence $w\bumpeq w'$ holds in $M$ as $M$ is a subdirect product of $N_1$ and $N_2$ by Proposition~\ref{prop:subdirect}.

Conversely, if an identity holds in the monoid $M$, it holds in every quotient monoid of $M$, including $N_1$ and $N_2$.
\end{proof}

\smallskip

The quotient monoid $M':=M/(z=1)$ relates to the 3-generated plactic monoid $M$ in the same way as the bicyclic monoid $B:=\langle a,b\mid ba=1\rangle$ relates to the 2-generated plactic monoid $\langle a,b\mid a b a=b a a,\ b a b=b b a\rangle$. The monoid $M'$ appears in \cite{KO12,KO15}; in particular, in \cite{KO15} it is proved that $M'$ and $M$ share identities of a certain form. Namely, a pair of words $s,t$ over $\{x,y\}$ is called \emph{reversive} if one obtains the same two words (as a set) when reading $s$ and $t$ backwards. Lemma~2.2 in \cite{KO15} states that the monoids $M$ and $M'$ satisfy the same identities of the form $s\bumpeq t$, where $s, t$ is a reversive pair of words. We show that all restrictions on $s\bumpeq t$ are not essential.

\begin{proposition}
\label{prop:equivalence}
The monoids $M$ and $M'$ are equationally equivalent.
\end{proposition}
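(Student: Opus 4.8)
The plan is to mirror the proof of Theorem~\ref{thm:equivalence}, again using the central localization $M\zin$ as an intermediary. First I would observe that $M$ contains a copy of the infinite cyclic monoid — for instance $\langle a\rangle$, which is infinite because word length is invariant under the relations \eqref{eq:knuth3}. Since $M\zin$ is generated by $M\cup\{z^{-1}\}$ and $z^{-1}$ commutes with every element of $M$ by Lemma~\ref{lem:z}(a), Corollary~\ref{cor:equivalence} applies with $T=M$, $S=M\zin$, and $d=z^{-1}$, and shows that $M$ and $M\zin$ are equationally equivalent. It then suffices to prove that $M\zin$ and $M'$ are equationally equivalent, and for this I would establish the sharper fact that $M\zin\cong M'\times\mathbb{Z}$, where $\mathbb{Z}$ is the additive group of integers.

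To construct this isomorphism I would combine two homomorphisms out of $M\zin$. The first is the projection $\pi\colon M\zin\to M'$: the canonical map $M\to M'$ sends $z$ to the identity, which is invertible, so by the universal property of localization it factors through $M\zin$, with $\pi(vz^m)=\bar v$ for $v\in M\setminus Mz$ and $m\in\mathbb{Z}$ (bars denoting images in $M'$). The second is the homomorphism $\sigma\colon M\zin\to\mathbb{Z}$ induced by $w\mapsto\occ(a,w)$ on $M$; this is well defined on $M$ because every relation in \eqref{eq:knuth3} is balanced, it sends $z=cba$ to $1$, and hence it too factors through $M\zin$, with $\sigma(vz^m)=\occ(a,v)+m$. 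I would then verify that $\Phi:=(\pi,\sigma)\colon M\zin\to M'\times\mathbb{Z}$ is a bijection. Surjectivity is immediate, as $(\bar v,n)$ is the image of $vz^{\,n-\occ(a,v)}$ for every $v\in M\setminus Mz$. For injectivity, the key point is that the congruence on $M$ generated by the pair $(z,1)$ is exactly $\{(u,u')\mid uz^k=u'z^l\text{ for some }k,l\ge0\}$, which uses only the centrality of $z$. Consequently, if $\Phi(v_1z^{m_1})=\Phi(v_2z^{m_2})$ with $v_1,v_2\in M\setminus Mz$, then $v_1z^k=v_2z^l$ in $M$ for some $k,l\ge0$; cancelling powers of $z$ by Lemma~\ref{lem:z}(b) and using $v_1,v_2\notin Mz$ forces $v_1=v_2$, whereupon $\sigma$ forces $m_1=m_2$, so $v_1z^{m_1}=v_2z^{m_2}$.

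Granting $M\zin\cong M'\times\mathbb{Z}$, I would finish as follows. The monoid $M'$ contains a copy of the infinite cyclic monoid, namely $\langle\bar a\rangle$: an equality $\bar a^n=\bar a^m$ would give $a^nz^k=a^mz^l$ in $M$ for some $k,l\ge0$, and comparing occurrences of $b$ yields $k=l$, after which cancelling $z^k$ via Lemma~\ref{lem:z}(b) gives $a^n=a^m$ and hence $n=m$. Therefore every identity satisfied by $M'$ is balanced. Since $\mathbb{Z}$ is commutative, it satisfies every balanced identity, so every identity of $M'$ holds in $\mathbb{Z}$; and since the identities of a direct product are the common identities of its factors, it follows that
\[
\mathrm{id}(M\zin)=\mathrm{id}(M'\times\mathbb{Z})=\mathrm{id}(M')\cap\mathrm{id}(\mathbb{Z})=\mathrm{id}(M'),
\]
where $\mathrm{id}(\cdot)$ denotes the set of satisfied identities. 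Together with $\mathrm{id}(M)=\mathrm{id}(M\zin)$ from the first step, this gives $\mathrm{id}(M)=\mathrm{id}(M')$, which is the proposition.

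The one non-formal ingredient is the isomorphism $M\zin\cong M'\times\mathbb{Z}$, and inside it the injectivity of $\Phi$; I expect this to be the main obstacle, since it is precisely here that the special properties of $z$ recorded in Lemma~\ref{lem:z} — centrality, the cancellation law, and the resulting uniqueness of the normal form $vz^m$ — must be used. (One can reach the same conclusion by applying Lemma~\ref{lem:localization} twice instead of exhibiting the product decomposition, but the decomposition $M\zin\cong M'\times\mathbb{Z}$ keeps the bookkeeping transparent.)
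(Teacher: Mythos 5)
Your proof is correct and follows essentially the same route as the paper: equational equivalence of $M$ and $M\zin$ via Corollary~\ref{cor:equivalence}, the decomposition $M\zin\cong M'\times\mathbb{Z}$ (your $\Phi$ is exactly the map $\delta$ that the paper constructs in the remark following the proposition), and the observation that all identities of $M'$ are balanced and hence hold in $\mathbb{Z}$, so that the identities of the product coincide with those of $M'$. The only differences are welcome details the paper leaves implicit, namely the explicit description of the congruence on $M$ generated by $(z,1)$ and the verification that $\langle\bar a\rangle$ is an infinite cyclic submonoid of $M'$.
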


\begin{proof}
Applying to $M$ the argument from the first paragraph of the proof of Theorem~\ref{thm:equivalence}, we get the equational equivalence of $M$ and $M\zin$. The following decomposition occurs in \cite{KO15} as formula (4) at p.103:
\begin{equation}\label{eq:direct}
M\zin\cong M'\times\mathbb{Z}.
\end{equation}
Since a monoid identity holds in the group $\mathbb{Z}$ if and only if it is balanced and all identities of $M'$ are balanced, the isomorphism \eqref{eq:direct} implies that $M\zin$ and $M'$ are equationally equivalent. The result now follows by transitivity.
\end{proof}

\medskip

\noindent\textbf{Remark}. In~\cite{KO15}, the isomorphism \eqref{eq:direct} was justified by a reference to~\cite{KO12} though in~\cite{KO12} it was not stated in such an explicit form. In fact, the result came from~\cite[Lemmas 5 and 6]{CO04} where the authors worked with plactic monoid algebras over a field rather than monoids themselves. For the reader's convenience, we include a short self-contained proof.

\smallskip

\begin{proof}
In the relations in~\eqref{eq:knuth3}, each letter $x\in\{a,b,c\}$ occurs the same number of times in both sides of each relation. Hence $\occ(x,u)=\occ(x,u')$ whenever words $u,u'$ over $\{a,b,c\}$ represent the same element of~$M$. Thus, the notation $\occ_a(w)$ makes sense for any element $w\in M$. Now we define a map $\delta\colon M\zin\to M'\times\mathbb{Z}$ as follows: if $M\zin\ni w=vz^{m}$ where $v\in M\setminus Mz$ and $m\in\mathbb{Z}$, then \[
 w\delta:=(\bar v,m+\occ_a(v)),
 \]
where $\bar v$ denotes the image of $v$ in $M'$. Since $v\mapsto\bar v$ is a bijection between $M\setminus Mz$ and $M'$, the map $\delta$ is a bijection, too. If $w'=v'z^{m}$ where $v'\in M\setminus Mz$ and $m'\in\mathbb{Z}$, we have $ww'=vv'z^{m+m'}$. Represent the product $vv'$ as $vv'=uz^{k}$ where $u\in M\setminus Mz$ and $k\in\mathbb{Z}_{\ge0}$ is the number of `new' $z$'s that arise at the junction of $v$ and $v'$. Then $\overline{vv'}=\bar u$ and $\occ(a,vv')=\occ(a,u)+k$ since $a$ occurs in $z=cba$ once. Therefore
\begin{align*}
(ww')\delta&=\bigl(vv'z^{m+m'}\bigr)\delta=\bigl(uz^{m+m'+k}\bigr)\delta\\
           &=(\bar u,m+m'+k+\occ(a,u))=(\overline{vv'},m+m'+\occ(a,vv'))\\
           &=(\bar v\cdot\overline{v'},m+m'+\occ(a,v)+\occ(a,v'))\\
           &=(\bar v,m+\occ(a,v))\cdot(\overline{v'},m'+\occ(a,v'))=w\delta\cdot w'\delta.
\end{align*}
Thus, $\delta$ is an isomorphism.
\end{proof}

\medskip

In a similar fashion, one can verify that for $i=1,2$, the monoid $N_i$ is equationally equivalent to its quotient $N'_i$ over the congruence on $N_i$ generated by the pair $(z,1)$. Thus, all monoids $M$, $M'$, $N_1$, $N_2$, $N'_1$, $N'_2$ are pairwise equationally equivalent.

\section{Conclusion}

It has been observed in the literature that `interesting' monoids coming from different parts of mathematics and having seemingly different nature tend to cluster with respect to their identities. One such cluster of equationally equivalent monoids includes the bicyclic monoid $B$, the 2-generated plactic monoid, the monoid of all upper triangular $2\times 2$-matrices over the tropical semiring\footnote{Recall that the tropical semiring $\mathbb{T}$ is formed by the real numbers augmented with the symbol $-\infty$ under the operations $a\oplus b:=\max\{a,b\}$ and $a\otimes b:=a+b$, for which $-\infty$ plays the role of zero: $a\oplus-\infty=-\infty\oplus a=a$ and $a\otimes-\infty=-\infty\otimes a=-\infty$. A square matrix over $\mathbb{T}$ is said to be upper triangular if its entries below the main diagonal all $-\infty$.}, see \cite{DJK18}, the so-called Chinese monoids \cite{JO11}, and some others. In this note, we meet another cluster grouping around the 3-generated plactic monoid; as follows from a recent result in \cite{JK21}, this cluster also hosts the monoid of all upper triangular $3\times 3$-matrices over the tropical semiring. For an example from the realm of finite monoids, see the author's recent note~\cite{Vo22}. It is very tempting to understand the intrinsic reasons of this clustering phenomenon.

\small

\end{document}